\newlist{tightenum}{enumerate}{3}
\setlist[tightenum,1]{noitemsep,nosep,
                        label=\rm(\alph*),
                        ref  =\alph*}
\setlist[tightenum,2]{noitemsep,nosep,
                         label=\rm(\roman*),
                         ref  =\roman*}
\setlist[tightenum,3]{noitemsep,nosep,
                         label=\rm(\roman*),
                         ref  =\roman*}
\newtheorem{thm}{Theorem}
\newtheorem{lem}[thm]{Lemma}
\newtheorem*{thm*}{Theorem}
\newtheorem{qn}[thm]{Question}
\theoremstyle{definition}
\theoremstyle{remark}
\newcommand{\dep}{depth}
\newcommand{\poly}{\ensuremath{\mathrm{poly}}}
\newcommand{\scol}{\ensuremath{\mathrm{scol}}}
\newcommand{\sreach}{\ensuremath{\mathrm{SReach}}}
\newcommand{\tangle}{\ensuremath{\mathrm{tn}}}
\newcommand{\bn}{\ensuremath{\mathrm{bn}}}
\newcommand{\GR}{\ensuremath{\mathrm{grid}}}
\newcommand{\link}{\ensuremath{\mathrm{link}}}
\newcommand{\WL}{\ensuremath{\mathrm{well}}}
\renewcommand{\le}{\leqslant}
\renewcommand{\leq}{\leqslant}
\renewcommand{\ge}{\geqslant}
\renewcommand{\geq}{\geqslant}
\DeclarePairedDelimiter\set{\{}{\}}
\author[Nicolas Bousquet et al.]{Nicolas Bousquet\affiliationmark{1,2}
  \and Wouter Cames van Batenburg\affiliationmark{3}\thanks{Supported by the Belgian National Fund for Scientific Research (FNRS).}\\
  \and Louis Esperet\affiliationmark{4}
  \and Gwena\"el Joret\affiliationmark{3}\thanks{Supported by the Belgian National Fund for Scientific Research (FNRS).}
\and Piotr Micek\affiliationmark{5}\thanks{Supported by the National Science Center of Poland under grant UMO-2023/05/Y/ST6/00079 within the WEAVE-UNISONO program.}}
\title[Shallow brambles]{Shallow brambles}
\affiliation{
LIRIS, CNRS, Universit\'e Claude Bernard Lyon 1, Lyon, France \\
CRM-CNRS, Montreal, Canada \\
Computer Science Department,  Universit\'e libre de Bruxelles, Brussels, Belgium\\
G-SCOP,   CNRS, Universit\'e Grenoble Alpes,  Grenoble, France\\
Faculty of Mathematics and Computer Science,
Jagiellonian University, Kraków, Poland}
\keywords{Sparsity, shallow minors, brambles, polynomial expansion}
\begin{document}
\publicationdata{vol. 27:3}{2025}{12}{10.46298/dmtcs.15257}{2025-02-19; 2025-02-19; 2025-06-18}{2025-09-09}
\maketitle
\begin{abstract}
  A graph class $\mathcal{C}$ has \emph{polynomial expansion} if there is a polynomial function $f$ such that for every graph $G\in \mathcal{C}$, each of the depth-$r$ minors of $G$ has average degree at most $f(r)$. 
In this note, we study bounded-radius variants of some classical graph parameters such as bramble number, linkedness and well-linkedness, and we show that they are pairwise polynomially related. Furthermore, in a  graph class with polynomial expansion they are all uniformly bounded by a polynomial in $r$.
\end{abstract}

\section{Introduction}

Nešetřil and Ossona de Mendez~\cite{sparsity} introduced the notion of graph classes with bounded expansion as a way to capture graphs that are sparse at all scales. 
To measure sparsity, they consider a refinement of the usual notion of graph minors, namely {\em shallow minors}. 
For $r\geq 0$, a graph $H$ is a \emph{depth-$r$ minor} of a graph $G$, denoted $H \preccurlyeq_r G$, if $H$ can be obtained from a subgraph of $G$ by contracting pairwise vertex-disjoint connected subgraphs, each 
of radius at most $r$, into vertices.  
The invariant $\nabla_r(G)$ is then defined as follows: 
\[
\nabla_r(G):= \max \left\{ \frac{|E(H)|}{|V(H)|}: H \preccurlyeq_r G \right\}.
\]
In other words, $\nabla_r(G)$ is the maximum edge density  of a depth-$r$ minor of $G$. 
A graph class $\mathcal{C}$ has \emph{bounded expansion} if there is a function $f$ such that $\nabla_r(G) \leq f(r)$ for every $G\in \mathcal{C}$ and every integer $r\geq 0$. 
Furthermore, $\mathcal{C}$ has \emph{polynomial expansion} if $f$ can be taken to be a polynomial.

The theory of graph classes with bounded expansion is rich and has a number of connections with other areas. 
There are several characterizations of the notion of bounded expansion using different graph parameters. 
Among them, the generalized coloring numbers play a special role as they are usually some of the easiest parameters to work with in practice, see the recent survey of Siebertz \cite{Sie25}.
In this note, we focus on strong coloring numbers, which are defined as follows. 
Given a linear order $\pi$ on the vertices of a graph $G$ and $r\in\mathbb{N}\cup\set{\infty}$, 
we say that a vertex $u$ is \emph{strongly $r$-reachable} from a vertex $v$ if: $\pi(u) \leq \pi(v)$ and there is a path of length at most $r$ joining $v$ and $u$ such that $\pi(v)< \pi(w)$ for every internal vertex $w$ of the path.
The set of vertices that are strongly $r$-reachable from $v$ with respect to $\pi$ is denoted by  $\sreach [r,\pi,v]$. 
The \emph{strong $r$-coloring number} is $$\scol_r(G):= \min_{\pi} \max_{v\in V(G)} |\sreach [r,\pi,v]|,$$
where the minimum is taken over all linear orders $\pi$ of $V(G)$.
Note that
\[\scol_0(G) \leq \scol_1(G) \leq \cdots \leq \scol_{\infty}(G).\]
Moreover, $\scol_1(G)$ is equal to the \emph{coloring number} of $G$ (one plus the degeneracy), whereas on the other side of the spectrum $\scol_{\infty}(G)$ equals the treewidth plus one. 
Zhu~\cite{Zhu09} proved that  the following statements are equivalent for a class of graphs $\mathcal{C}$:
\begin{enumerate}
\item[(i)] there is a function $f$ such that $\nabla_r(G)\leq f(r)$ for all $G$ in $\mathcal{C}$ and all $r\geq0$;
\item[(ii)] there is a function $f$ such that $\scol_r(G)\leq f(r)$ for all $G$ in $\mathcal{C}$ and all $r\geq0$.
\end{enumerate}
This is one of the first characterizations of classes with bounded expansion, among many.

\smallskip

Let us now turn our attention to classes with polynomial expansion. 
Denote by $\omega_r(G)$ the size of a largest clique in a depth-$r$ minor of $G$. 
Then the following statements are equivalent for a class  of graphs $\mathcal{C}$~\cite{SparsityLectureNotes2019}: 
\begin{enumerate}
\item \label{item:nabla} there is a polynomial function $f$ such that $\nabla_r(G)\leq f(r)$ for all $G$ in $\mathcal{C}$ and all $r\geq0$;
\item \label{item:omega} there is a polynomial function $f$ such that $\omega_r(G)\leq f(r)$ for all $G$ in $\mathcal{C}$ and all $r\geq0$.
\end{enumerate}
Moreover, if $\mathcal{C}$ is monotone\footnote{A class of graphs $\mathcal{C}$ is {\em monotone} if any subgraph of a graph from $\mathcal{C}$ is also in $\mathcal{C}$.} then the following is also equivalent to  \eqref{item:nabla} and \eqref{item:omega} (see~\cite{DN16}): 
\begin{enumerate}
\setcounter{enumi}{2}
\item \label{item:separator} there is a real number $\delta>0$ such that every graph $G\in \mathcal{C}$ has a balanced separator of order $O(|V(G)|)^{1-\delta}$.
\end{enumerate}
Having balanced separators of size strongly sublinear in the number of vertices 
is a very useful property in structural graph theory and algorithm design, which motivates the study of graph classes with polynomial expansion. 
However, these classes are far from being well understood. 
Indeed, not much else is known besides the equivalences stated above. 

Given that classes with bounded expansion can be characterized in terms of bounded strong coloring numbers, it is natural to wonder whether the following is equivalent to polynomial expansion:

\begin{enumerate}
\setcounter{enumi}{3}
\item \label{item:scols} there is a polynomial function $f$ such that $\scol_r(G)\leq f(r)$ for all $G$ in $\mathcal{C}$ and all $r\geq0$.
\end{enumerate} 

It is not difficult to show that \eqref{item:scols} implies \eqref{item:nabla}, see e.g.\ \cite{ER2018, Sie25}. 
However, the other direction remains open, which was stated as an open problem by Joret and Wood:\footnote{In~\cite{ER2018} the question is asked for monotone graph classes, however we believe monotonicity is not necessary.}
 
\begin{qn}[see~\cite{ER2018}]\label{conj:weaker} 
Is it true that for every graph class $\mathcal{C}$ with polynomial expansion, there exists a polynomial function $f$ such that for every integer $r\geq 0$ and every graph $G \in \mathcal{C}$,
\[
\scol_r(G) \leq f(r)?
\]
\end{qn}

If true, this would yield a new characterization of polynomial expansion, in terms of polynomial strong coloring numbers.

In this note, we introduce  four new graph parameters that are bounded radius refinements of classical graph parameters: the \emph{\dep-$r$ bramble number}, \emph{\dep-$r$ tangle number}, \emph{\dep-$r$ linkedness} and \emph{\dep-$r$ well-linkedness} of a graph. All these parameters quantify how well-connected a subset of vertices of bounded diameter can be in a given graph. 
Our main result is that, for some absolute constants $c_1, c_2 >0$, these new parameters are all lower bounded by polynomials in $r$ and $\omega_{c_1\cdot r}(G)$, and likewise they are all upper bounded by polynomials in $r$ and $\omega_{c_2\cdot r}(G)$; see Theorem~\ref{thm:summaryresults}. 
In particular, these parameters are all polynomially bounded in $r$ for graphs belonging to a fixed class with polynomial expansion.

It is our hope that these parameters might help in answering  Question~\ref{conj:weaker} positively. 
In particular, to do so it would be enough to show that, for graphs belonging to a fixed class with polynomial expansion, strong $r$-coloring numbers can be bounded from above by a polynomial function of $r$ and the \dep-$r$ bramble number:

\begin{qn}\label{quest:equiv}
Is it true that for every graph class $\mathcal{C}$ with polynomial expansion, there exists a polynomial function $f$ such that for every integer $r\geq 0$ and every graph $G \in \mathcal{C}$, 
\[
\scol_r(G) \leq f(r,\bn_{r}(G))?
\]
\end{qn}

Let us emphasize the following subtlety in Question~\ref{quest:equiv}: The assumption that $G$ belongs to a fixed class with polynomial expansion is essential, because if we consider the set of all graphs, the $r$-strong coloring numbers cannot be bounded from above by {\em any} function of $r$ (polynomial or not) and the \dep-$r$ bramble number. 
This remains true even if we replace the \dep-$r$ bramble number by the \dep-$g(r)$ bramble number for any function $g(r)$, see~\cref{lem:counterexample} in Section~\ref{sec:proof}. 

\medskip

\subsection*{Related work} The topic of this paper bears some superficial similarities with the Coarse Graph Theory project initiated recently by Georgakopoulos and Papasoglu \cite{GP23}, which aims at revisiting structural graph theory (and in particular the theory of graph minors) from the point of view of metric theory. A \emph{fat $K_t$-minor} or \emph{metric $K_t$-minor} in a graph $G$ in this context is a minor-model of $K_t$ in $G$ such that  the branch vertices are far apart, and are connected by paths that are also far apart. Here all distances are considered in the graph $G$. A recent paper by Dragan \cite{Dra25} on related topics  explores the minimum radius $r$ such that for every bramble $\mathcal{B}$ in a graph $G$, there is a ball of radius $r$ in $G$ that intersects all elements of $\mathcal{B}$. One of the results of \cite{Dra25} is that this minimum radius is within a small multiplicative constant of the \emph{tree-length} of $G$, the minimum over all tree-decompositions of $G$ of the maximum diameter of a bag.

This is somewhat orthogonal to the approach we take here, where minor-models and bramble elements have small radius (where distances are computed in the graph induced by the branch vertices or elements, not in the original graph). Nevertheless, this paper can be seen as an effort to understand what remains true if we introduce appropriate distance constraints in classical objects from graph minor theory such as minor-models, brambles, tangles, and well-linked sets.

\medskip

It was pointed out to us by Micha\l\ Pilipczuk that our definition of a shallow bramble is related to the \emph{cop-width} of a graph, a parameter recently introduced by Toru\'nczyk \cite{Tor23}. Note that very much like brambles (or heavens) in the helicopter version of the cops and robber game of Seymour and Thomas \cite{ST1993}, the existence of a depth-$r$ bramble of order $k$ provides  a winning strategy for the robber against $k-1$ cops in the version of the game by Toru\'nczyk  when the robber has speed $2r+1$. 

\subsection*{Organization of the paper} We start with the definitions of all parameters in Section \ref{sec:def}. In Section \ref{sec:res} we state the main existing results on the connections between these parameters in the classical $r=\infty$ setting, and then state our main result for the general case, that is, for an arbitrary $r\geq 0$. 
Our main result, Theorem \ref{thm:summaryresults}, is then proved in Section \ref{sec:proof}. We conclude in Section \ref{sec:ccl} with a discussion on other natural bounded radius refinements of classical parameters that might be relevant to study in this context.

\section{Definitions and notation}\label{sec:def}

All graphs in this paper are finite, simple, and undirected. 
For a vertex $v$ in a graph $G$ and an integer $r\geq 0$, the \emph{$r$-ball in $G$ centered at $v$} is the set of all vertices of $G$ that are at distance at most $r$ from $v$. 

We say that a subset $B\subseteq V(G)$ is connected if the induced subgraph $G[B]$ is.
A {\em minor-model} of a graph $H$ in a graph $G$ is a collection $\mathcal{M}=\{M_v: v\in V(H)\}$ of pairwise disjoint connected subsets of $V(G)$, one for each vertex $v$ of $H$, such that there is an edge in $G$ between $M_v$ and $M_w$ for every edge $vw\in E(H)$. 
If moreover $G[M_v]$ has radius at most $r$ for every $v\in V(H)$, we say that $\mathcal{M}$ is  a {\em depth-$r$} minor-model of $H$ in $G$.

Two vertex subsets of a graph $G$ are said to \emph{touch} if they have a vertex in common or are joined by an edge of $G$.  A \emph{bramble} $\mathcal{B}$ in $G$ is a set of pairwise touching connected subsets of $V(G)$, called elements of $\mathcal{B}$. Note that a bramble can be viewed as a generalization of a minor-model of a complete graph, with the difference that its elements are allowed to have vertices in common.  A \emph{hitting set} of $\mathcal{B}$ is a set of vertices $X$ such that $B \cap X \neq \emptyset$ for each $B\in \mathcal{B}$. The \emph{order} of $\mathcal{B}$ is the minimum size of such a hitting set. Finally, the \emph{bramble number of $G$}, denoted $\bn_\infty(G)$, is the maximum order of a bramble in $G$. A classical result of Seymour and Thomas \cite{ST1993} states that $\bn_\infty(G)$ is equal to the treewidth of $G$ plus one.

\smallskip

We consider two successive refinements of brambles:

\begin{itemize}
\item  For $r\in \mathbb{N}\cup \{\infty\}$, a \emph{\dep-$r$ bramble} of a graph $G$ is a bramble such that each of its bramble elements induces a subgraph of $G$ of radius at most $r$.
The maximum order of a \dep-$r$ bramble of $G$ is the \emph{\dep-$r$ bramble number of $G$}, which we denote by $\bn_r(G)$. 

\item For $t\geq 1$ and $r \in \mathbb{N}\cup \{\infty\}$, a \emph{\dep-$r$ $t$-bramble} is a \dep-$r$ bramble $\mathcal{B}$ such that the intersection of any $t$ (not necessarily distinct) elements of $\mathcal{B}$ is non-empty.
The maximum order of a \dep-$r$ $t$-bramble of a graph $G$ is the \emph{\dep-$r$ $t$-bramble number of $G$}, denoted by $\bn_{r,t}(G)$.
\end{itemize} 

The notion of \dep-$r$ $t$-bramble is of interest because for $t=3$ and $r=\infty$, it is closely related to a \emph{tangle}. Tangles are usually defined in terms of separations, but as demonstrated in~\cite{Reed}, the following yields an equivalent definition.
For $r\in \mathbb{N}\cup \{\infty\}$, a \emph{\dep-$r$ tangle} $\mathcal{T}$ of a graph $G$ is a \dep-$r$ bramble of $G$ such that for any three (not necessarily distinct) elements $T_1,T_2,T_3$ of $\mathcal{T}$, either
\begin{enumerate}[label={(\roman*)}]
\item $T_1\cap T_2 \cap T_3$ is non-empty, or
\item there is an edge $e\in E(G)$ such that all of $T_1, T_2, T_3$ contain an endpoint of $e$. 
\end{enumerate}
The \emph{order} of a \dep-$r$ tangle $\mathcal{T}$ is the order of $\mathcal{T}$ viewed as a bramble.  
The \emph{\dep-$r$ tangle number $\tangle_r(G)$} of a graph $G$ is the maximum order of a \dep-$r$ tangle in $G$.
The \emph{tangle number} of $G$ is $\tangle_{\infty}(G)$.

Observe that every \dep-$r$ $3$-bramble is a \dep-$r$ tangle, and every \dep-$r$ tangle is a \dep-$r$ bramble, so we have $\bn_{r,3}(G) \leq \tangle_r(G) \leq \bn_r(G)$ for every graph $G$.

Let $r\in\mathbb{N}\cup\set{\infty}$ and let $k$ be an integer with $k\geq 1$. 
We say that a vertex set $S$ in a graph $G$ is \emph{\dep-$r$ $k$-linked} if for any set $X$ of fewer than $k$ vertices, there is a connected subset in $G-X$ of radius at most $r$ that contains more than half of the vertices of $S$. 
The \emph{\dep-$r$ linkedness} of $G$, denoted $\link_r(G)$, is the largest integer $k$ for which $G$ contains a \dep-$r$ $k$-linked set. The \emph{linkedness} of $G$ is $\link_{\infty}(G)$.

We say that a set $S$ of vertices is \emph{\dep-$r$ well-linked} if for every two subsets $A$ and $B$ of $S$ with $|A|=|B|$ and for every vertex set $Y$ of size less than $|A|$, there exists an $A-B$ path of length $\leq r$ in $G-Y$.
The \emph{\dep-$r$ well-linkedness} of $G$, denoted $\WL_r(G)$, is the size of the largest \dep-$r$ well-linked set in $G$.
The \emph{well-linkedness} of $G$ is $\WL_{\infty}(G)$.

We note that in the case $r=\infty$, by Menger's theorem, the notion of a \dep-$r$ well-linked set could be equivalently defined by requiring the existence of $|A|$ vertex-disjoint $A-B$ paths (of length at most $r$) for every two equal-sized subsets $A$ and $B$ of $S$. However, for finite $r$ this equivalence is far from correct, due to examples in~\cite{LNP1978}.

\section{Main results}\label{sec:res}

The following are classical results about the $r=\infty$ case of the parameters under consideration in this note. 

\begin{thm}[\cite{Reed, RobertsonSeymourX}]\label{thm:summaryliterature}
For every graph $G$,
\begin{enumerate}[label={\rm{(\roman*)}},itemsep=0pt, parsep=0pt, before=\setlength{\baselineskip}{8mm}]
\item $\scol_{\infty}(G)=\bn_{\infty}(G)$;
\item $\omega_{\infty}(G) \leq \bn_{\infty}(G)$;
\item $\link_{\infty}(G)\leq \bn(G)\leq 2\cdot \link_{\infty}(G)$;
\item $\bn_{\infty}(G)\leq \WL_{\infty}(G)\leq 4\cdot \bn_{\infty}(G)$;
\item $\tangle_{\infty}(G) \leq \bn_{\infty}(G)\leq \frac{3}{2}\cdot \tangle_{\infty}(G)$.
\end{enumerate}
\end{thm}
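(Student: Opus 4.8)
The plan is to handle the five items in increasing order of difficulty, leaning throughout on the two characterizations already recalled: $\scol_\infty(G)$ equals the treewidth of $G$ plus one, and the Seymour--Thomas identity asserts that $\bn_\infty(G)$ equals the treewidth of $G$ plus one. Item~(i) is then immediate, since both sides equal the treewidth plus one. For item~(ii), I would take a minor-model $\{M_1,\dots,M_t\}$ of $K_t$ with $t=\omega_\infty(G)$: its parts are connected and pairwise adjacent, hence pairwise touching, so the family is a bramble; as the $M_i$ are pairwise disjoint, every hitting set contains a vertex of each $M_i$ and thus has at least $t$ vertices. Therefore this bramble has order at least $t$, giving $\bn_\infty(G)\ge\omega_\infty(G)$.

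For item~(iii), the inequality $\link_\infty(G)\le\bn_\infty(G)$ comes from a \dep-$\infty$ $k$-linked set $S$ with $k=\link_\infty(G)$: consider $\mathcal B=\set{C : C \text{ connected},\ |C\cap S|>|S|/2}$. Any two members meet $S$ in more than $|S|/2$ vertices each, so they share a vertex of $S$ by inclusion--exclusion, whence $\mathcal B$ is a bramble; and no set $X$ with $|X|<k$ is a hitting set, since $k$-linkedness yields a member of $\mathcal B$ inside $G-X$. Thus $\bn_\infty(G)\ge k$. For the reverse bound $\bn_\infty(G)\le 2\link_\infty(G)$, fix a bramble $\mathcal B$ of order $k=\bn_\infty(G)$ and let $H$ be a minimum hitting set, so $|H|=k$; I claim $H$ is \dep-$\infty$ $\lceil k/2\rceil$-linked. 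Indeed, if some $X$ with $|X|<\lceil k/2\rceil$ left every component of $G-X$ with at most $|H|/2$ vertices of $H$, then since $|X|<k$ some element $B^\ast\in\mathcal B$ avoids $X$ and lies in one component $K$ of $G-X$; every element either meets $X$ or, by touching $B^\ast$, lies entirely in $K$, so $X\cup(H\cap K)$ hits $\mathcal B$ and has size at most $(\lceil k/2\rceil-1)+\lfloor k/2\rfloor<k$, contradicting minimality. Hence $\link_\infty(G)\ge\lceil k/2\rceil$, and the factor $2$ is exactly the cost of splitting $k$ into the two summands $|X|$ and $|H\cap K|$.

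Items~(iv) and~(v) are where the real work lies, and here I would assemble the classical structural comparisons. For~(v), every \dep-$\infty$ tangle is a bramble, so $\tangle_\infty(G)\le\bn_\infty(G)$ directly from the definitions (this is also the inequality $\tangle_r\le\bn_r$ already recorded); for the reverse I would invoke the Robertson--Seymour theory, namely that $\tangle_\infty(G)$ equals the branch-width $\beta(G)$ and that $\beta(G)\le\bn_\infty(G)\le\tfrac32\beta(G)$, which gives $\bn_\infty(G)\le\tfrac32\tangle_\infty(G)$. For~(iv), the $r=\infty$ case lets me use the Menger reformulation of well-linkedness (equally many vertex-disjoint paths between equal-sized subsets), which is precisely the bridge to tree-decompositions: one direction builds, from a bramble of order $k$, a well-linked set of size at least $k$, so $\bn_\infty(G)\le\WL_\infty(G)$; the other shows by a standard linkage/separator argument that a well-linked set of size $w$ forces $\bn_\infty(G)\ge w/4$, i.e.\ $\WL_\infty(G)\le 4\bn_\infty(G)$.

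\textbf{Main obstacle.} The difficulty is concentrated entirely in the reverse directions of~(iv) and~(v): converting a highly connected \emph{set} (well-linked, or the branch-width witness) back into a bramble or tangle of comparable order while controlling the multiplicative loss. Item~(iii) already displays the mechanism in its simplest form---the ``more than half'' threshold is exactly what forces the factor~$2$---and the constants $4$ and $\tfrac32$ arise from the analogous but more delicate accounting in the well-linked and branch-width settings. These are the points at which one genuinely needs the quantitative structural results of Reed and of Robertson--Seymour rather than a one-line argument.
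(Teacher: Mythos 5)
The first thing to note is that the paper does not actually prove this theorem: it is a compilation of classical results stated with citations to Reed's survey and to Robertson--Seymour (Graph Minors X), so there is no internal proof to compare yours against; the fair comparison is with those sources and with the finite-$r$ analogues the paper proves in Section 4. Judged on that basis, your proposal is correct where it gives details, and it defers exactly where the paper defers. Items (i) and (ii) are immediate from facts the paper records ($\scol_\infty(G)$ equals treewidth plus one, and Seymour--Thomas gives $\bn_\infty(G)$ equals treewidth plus one; a $K_t$-minor-model is a bramble of order $t$ since its branch sets are disjoint). Your proof of (iii) is complete and correct, and it is the classical argument: the connected sets containing a majority of a $k$-linked set form a bramble that no set of fewer than $k$ vertices hits, and conversely a minimum hitting set $H$ of a maximum-order bramble is $\lceil |H|/2\rceil$-linked, because any element avoiding $X$ lies in a single component $K$ of $G-X$, every other element avoiding $X$ lies in that same component (touching forces it), and $X\cup(H\cap K)$ would otherwise be a hitting set of size at most $\lceil |H|/2\rceil-1+\lfloor |H|/2\rfloor<|H|$. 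These are precisely the arguments the paper adapts, with radius bookkeeping, when proving $\link_r(G)\le\bn_r(G)$ and $\bn_r(G)\le 2\cdot\link_{3r+1}(G)$ for finite $r$. For (iv) and (v) you assert rather than prove the hard inequalities $\WL_\infty(G)\le 4\cdot\bn_\infty(G)$ and $\bn_\infty(G)\le\frac{3}{2}\cdot\tangle_\infty(G)$, falling back on Reed and on branch-width theory; since the paper cites the same sources for the entire theorem, this is an acceptable resolution rather than a defect. Two small improvements: the forward inequality $\bn_\infty(G)\le\WL_\infty(G)$ needs no citation---a minimum hitting set of a maximum-order bramble is well-linked by the same short exchange argument the paper gives for $\bn_r(G)\le\WL_{4r+1}(G)$, so you could have kept that direction self-contained as you did in (iii); and in (v) the detour through branch-width $\beta(G)$ loses a rounding term (the classical bound is $\mathrm{tw}(G)+1\le\lceil 3\beta(G)/2\rceil$, which for odd $\beta$ exceeds $\frac{3}{2}\beta(G)$), so to obtain the clean factor $\frac{3}{2}$ one should cite Reed's direct bramble-versus-tangle comparison rather than pass through branch-width.
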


\medskip
Our main result is the following theorem, which can be seen as a bounded radius refinement of Theorem~\ref{thm:summaryliterature}. 

\begin{thm}\label{thm:summaryresults}
For every graph $G$ and all integers $r\geq 0$ and $t\geq 1$,
\begin{enumerate}[label={\rm{(\roman*)}},itemsep=0pt, parsep=0pt, before=\setlength{\baselineskip}{8mm}]
\item \label{item:one} $\omega_r(G)\leq \bn_r(G)\leq \scol_{4r+1}(G)$;
\item $ \bn_r(G) \leq (5r+1)\cdot (\omega_{5r+1}(G))^2$;
\item $\link_r(G) \leq \bn_r(G)$;
\item $\bn_r(G) \leq 2 \cdot \link_{3r+1}(G)$;
\item $\bn_r(G) \leq \WL_{4r+1}(G)$;
\item $\WL_r(G) \leq 4 \cdot (1+\link_{3r}(G))^2$;
\item $\bn_{r,t}(G) \leq \bn_r(G) \leq t \cdot \bn_{3r+1,t}(G)$;
\item $\bn_{r,3}(G) \leq \tangle_r(G) \leq \bn_r(G)$.
\end{enumerate}
\end{thm}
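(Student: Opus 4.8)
The plan is to prove the eight bounds by direct constructions, with a single combinatorial device doing most of the work for the ``forward'' directions, and with two genuinely new quadratic-loss arguments (special to finite $r$) handling the two remaining inequalities. Several bounds are definitional: a \dep-$r$ $K_\ell$-minor is a collection of $\ell$ pairwise-disjoint, pairwise-adjacent connected sets of radius $\le r$, hence a \dep-$r$ bramble whose disjointness forces every hitting set to have size $\ge\ell$, giving $\omega_r(G)\le\bn_r(G)$; and a \dep-$r$ $3$-bramble is a \dep-$r$ tangle is a \dep-$r$ bramble, giving the last item (already observed after the definitions). The device I would set up first: for a \dep-$r$ bramble $\mathcal{B}$ of order $k$ fix a \emph{minimum} hitting set $H$, so $|H|=k$, and record two facts. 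First, if $|W|<k$ then $W$ is not a hitting set, so some element of $\mathcal{B}$ avoids $W$. Second, if $E^\ast\in\mathcal{B}$ has center $c^\ast$ and $\mathcal{F}\subseteq\mathcal{B}$ consists of elements all avoiding a set $X$, then $\bigcup\mathcal{F}$ is connected, lies in $G-X$, and has radius at most $3r+1$ from $c^\ast$: every element touches $E^\ast$, and any vertex reaches $c^\ast$ in $\le 2r$ steps inside its own element, one touching edge, and $\le r$ steps inside $E^\ast$.

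With this device, four items fall out quickly. For $\link_r\le\bn_r$, given a \dep-$r$ $k$-linked set $S$, the connected radius-$\le r$ sets containing more than half of $S$ form a \dep-$r$ bramble (any two share an $S$-vertex), and $k$-linkedness says no set of size $<k$ hits all of them. For $\bn_r\le\WL_{4r+1}(G)$, I would show $H$ is \dep-$(4r+1)$ well-linked: given equal $A,B\subseteq H$ and $Y$ with $|Y|<|A|$, the sets $Y\cup(H\setminus A)$ and $Y\cup(H\setminus B)$ have fewer than $k$ vertices, so some elements $E_A,E_B$ avoid them; then $E_A$ meets $A$, $E_B$ meets $B$, both avoid $Y$, they touch, and a path through $E_A\cup E_B$ gives an $A$--$B$ path of length $\le 4r+1$ in $G-Y$. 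For $\bn_r\le 2\link_{3r+1}(G)$, again take $S=H$: for $|X|<k/2$ let $M$ be the union of all $X$-avoiding elements; by the second fact $M$ is connected, avoids $X$, and has radius $\le 3r+1$, and if $|M\cap H|\le k/2$ then $X\cup(M\cap H)$ would be a hitting set of size $<k$ (each $X$-avoiding element lies in $M$ and is hit inside $H$), so $|M\cap H|>k/2$ and $H$ is \dep-$(3r+1)$ $\lceil k/2\rceil$-linked. The same computation with threshold $k/t$ shows that for $|X|<k/t$ one has $|M\cap H|>(1-1/t)k$, whence the connected radius-$\le 3r+1$ sets meeting $H$ in more than $(1-1/t)k$ vertices form a $t$-bramble (any $t$ of them miss fewer than $k$ vertices of $H$ in total) of order $\ge k/t$, giving $\bn_r\le t\cdot\bn_{3r+1,t}(G)$.

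For the upper bound $\bn_r(G)\le\scol_{4r+1}(G)$ I would adapt Reed's duality $\scol_\infty=\bn_\infty$. From an order $\pi$ realising $\scol_{4r+1}(G)=k$, the strong-reachability sets $\sreach[4r+1,\pi,v]$ play the role of the bags of a tree-decomposition; since two touching \dep-$r$ elements have combined diameter $\le 4r+1$, the subtrees indexed by the bramble elements pairwise intersect, so by the Helly property on trees a single reachability set of size $\le k$ meets every element. The index $4r+1$ is exactly this diameter bound.

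The two remaining items are where a \emph{quadratic} loss appears, and these I expect to be the real work. For $\bn_r(G)\le (5r+1)(\omega_{5r+1}(G))^2$ the plan is to feed the \dep-$(3r+1)$ $(k/2)$-linked set on the universe $H$ (size $k$) produced above into a greedy extraction of a \dep-$(5r+1)$ clique minor: carve pairwise-disjoint connected branch sets, each a majority radius-$(3r+1)$ region in the graph with the previously used vertices deleted, and join them by short paths (the extra $2r$ in $5r+1$). Since making $\ell$ branch sets pairwise adjacent spends a linkedness budget of order $\ell$ per set, only $\ell\approx\sqrt{k/(5r+1)}$ can be sustained, yielding $\omega_{5r+1}(G)\gtrsim\sqrt{k/(5r+1)}$. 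For $\WL_r(G)\le 4(1+\link_{3r}(G))^2$, from a \dep-$r$ well-linked set $S$ of size $w$ I would use the abundance of short paths between equal-sized subsets to argue that, robustly against any separator of size $<\ell$, a majority of $S$ lands inside one radius-$3r$ region, with $\ell\approx\tfrac12\sqrt w$ arising from balancing the separator budget against the $\approx\ell^2$ short connections such a bounded-radius region can maintain. Pinning down the branch-set radius in the first and the robust clustering with the correct constants in the second are, I expect, the main obstacles; everything else reduces to the device above.
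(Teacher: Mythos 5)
Your hitting-set device is sound and does most of what you claim: fixing a maximum-order \dep-$r$ bramble $\mathcal{B}$ with minimum hitting set $H$, $|H|=k$, the union $M$ of all $X$-avoiding elements is indeed connected, avoids $X$, and has radius at most $3r+1$ --- provided the reference element $E^*$ is itself chosen among the $X$-avoiding elements (your statement of the device omits this, but it is always available since $|X|<k$). With that fix, your proofs of (iii), (iv), (v) and (vii) are correct; they are mild variants of the paper's, which use $(3r+1)$-balls around the center of a single $X$-avoiding element where you use the union of all of them, with the same constants. Items (viii) and the first inequality of (i) are definitional, as you say. The other three bounds, however, are not proved.

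For $\bn_r(G)\le\scol_{4r+1}(G)$ your argument is invalid: for finite $r$ the sets $\sreach[4r+1,\pi,v]$ do not form the bags of any tree-decomposition, so there is no tree on which to invoke the Helly property; that mechanism is precisely what is special to $r=\infty$, where an optimal order yields a chordal completion. The conclusion you want --- that a single reachability set contains a hitting set --- is true but needs a direct argument, which the paper gives: for each $B\in\mathcal{B}$ let $v_B$ be its $\pi$-minimum vertex, let $v^*$ maximize $\pi(v_B)$, and for each $B$ walk from $v^*$ along a path of length at most $4r+1$ (inside $v^*$'s own element, one touching edge, then inside $B$), taking the first vertex $x_B$ with $\pi(x_B)\le\pi(v^*)$; since all other vertices of $v^*$'s element come after $v^*$ in $\pi$, one gets $x_B\in B$ and $x_B\in\sreach[4r+1,\pi,v^*]$, so these $x_B$ form a hitting set inside one reachability set. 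Nothing equivalent appears in your proposal.

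Items (ii) and (vi), which you yourself flag as the real work, are sketches with genuine holes. In (ii), ``pairwise-disjoint branch sets, each a majority radius-$(3r+1)$ region'' is self-contradictory: two disjoint sets cannot each contain more than half of $H$. Read charitably (small branch sets carved out of successive majority regions), your sketch still has no mechanism forcing each new branch set to touch \emph{all} previous ones: linkedness only yields a majority region avoiding the deleted vertices, not one adjacent to every earlier branch set. The paper does not route through linkedness at all; it works directly with the bramble, defining ``good models'' (depth-$(5r+1)$ clique minor-models whose branch sets have size $O(rt)$ with $t=\omega_{5r+1}(G)$), choosing one that lexicographically minimizes $(|R(\mathcal{M})|,|V(\mathcal{M})|)$ where $R(\mathcal{M})$ is the union of the bramble elements avoiding the model, and using this minimality to give each branch set $M_i$ a private bramble element meeting $M_i$ and avoiding all other branch sets; these private elements together with the radius-$(3r+1)$ set $R(\mathcal{M})$ supply the short connections for one more branch set, contradicting minimality. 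In (vi), your paragraph restates the goal (a majority of $S$ inside one radius-$3r$ region of $G-X$) plus a budget heuristic for $\ell\approx\tfrac12\sqrt{|S|}$; the actual proof needs a dichotomy: either some $2r$-ball of $G-X$ already contains more than $\lfloor\tfrac12\sqrt{|S|}\rfloor$ vertices of $S$, in which case one contradicts \dep-$r$ well-linkedness by pairing a subset of $B_{2r}(v)\cap S$ against an equal-sized subset of $S\setminus(X\cup B_{3r}(v))$, or no $2r$-ball does, in which case one greedily collects $r$-balls (each capturing a new vertex of $S$) until their $2r$-expansions cover half of $S$ and then pairs $U_r\cap S$ against $S\setminus(X\cup U_{2r})$; in either case no short $A$--$B$ path can exist in $G-X$. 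Neither this case analysis nor any substitute for it is in your proposal, so the two quadratic bounds remain unestablished.
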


If we are happy with slightly less optimized bounds, we can summarize most of Theorem~\ref{thm:summaryresults} in a single sequence of inequalities, for instance as follows (for $r\geq 1$):

$$\omega_r \leq \WL_{5r} \leq 16 \cdot \link_{15r}^2 \leq 50 \cdot \tangle_{46r}^2 \leq 50 \cdot \bn_{46r}^2 \leq 10^7 r^2 \cdot \omega_{250 r}^4 .$$

In particular, the parameters $\omega_r, \bn_r, \WL_r, \link_r, \tangle_r, \bn_{r,t}$ are pairwise polynomially upper and lower bounded in terms of each other and $r$ (and $t$).

\section{The proofs}\label{sec:proof}

We start with the following lemma mentioned in the introduction, showing that if we consider the set of all graphs,
the strong $r$-coloring numbers cannot be bounded from above by any function of $r$ and the \dep-$g(r)$ bramble number, for any function $g(r)$. 

\begin{lem}\label{lem:counterexample}
For all integers $d,r,s \ge 1$, there exists a graph $G$ such that $\scol_r(G) \geq d$ and $\bn_s(G)=2$.
\end{lem}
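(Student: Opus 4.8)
The plan is to exhibit a graph that is globally ``complex'' (forcing a large strong coloring number) but locally tree-like at scale $s$ (forcing the \dep-$s$ bramble number down to $2$). Concretely, I would take $G$ to be a $d'$-regular graph of large girth, where $d':=\max(d,3)$ and the girth exceeds $6s+3$; the existence of such graphs for every prescribed girth is a classical fact (Erd\H{o}s and Sachs). Note that the construction depends only on $d$ and $s$, not on $r$, which already signals why no function $g$ can rescue a bound of $\scol_r$ by $\bn_{g(r)}$.

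The lower bound $\scol_r(G)\ge d$ is immediate. Since $\scol_0\le\scol_1\le\cdots$, we have $\scol_r(G)\ge\scol_1(G)$, and $\scol_1(G)$ equals the coloring number, that is, one plus the degeneracy. A $d'$-regular graph has degeneracy exactly $d'$, so $\scol_r(G)\ge d'+1\ge d$. Likewise $\bn_s(G)\ge 2$ is clear: any two adjacent vertices $u,v$ give singleton bramble elements $\{u\},\{v\}$ of radius $0$ that touch, form a \dep-$s$ bramble, and cannot be hit by a single vertex.

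The substance is the upper bound $\bn_s(G)\le 2$, and the key point is that \dep-$s$ brambles are \emph{metrically confined}. Arguing by contradiction, I would fix a \dep-$s$ bramble $\mathcal{B}$ of order $\ge 3$, pick any element $B_0$ with center $c_0$, and show that every vertex of every element of $\mathcal{B}$ lies in $\ball(c_0,3s+1)$: each element $B$ has radius $\le s$ hence diameter $\le 2s$, and since $B$ touches $B_0$ (either sharing a vertex of $B_0$ or joined to it by an edge) some vertex of $B$ lies within distance $s+1$ of $c_0$, so all of $B$ lies within $3s+1$ of $c_0$. Because the girth exceeds $6s+3=2(3s+1)+1$, a standard BFS argument shows that $G[\ball(c_0,3s+1)]$ is a forest: any non-tree edge inside this ball would close a cycle of length at most $2(3s+1)+1$, contradicting the girth. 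As this subgraph is induced, $\mathcal{B}$ is a bramble of a forest, which has treewidth $1$ and hence bramble number at most $2$ by the theorem of Seymour and Thomas~\cite{ST1993} -- contradicting order $\ge 3$. Thus $\bn_s(G)=2$.

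The main obstacle, and the conceptual crux, is precisely this confinement step together with the girth calculation: one must verify carefully that ``radius $\le s$ plus pairwise touching'' pins the \emph{entire} bramble inside a single ball whose radius is controlled linearly in $s$, and then choose the girth threshold so that this ball induces a forest. Once these two quantitative points are nailed down, the reduction to the classical fact that a forest has $\bn_\infty\le 2$ closes the argument, and everything else is routine bookkeeping.
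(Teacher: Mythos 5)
Your proof is correct and follows essentially the same route as the paper's: take a high-girth graph of large minimum degree (the paper cites Erd\H{o}s, you cite Erd\H{o}s--Sachs), get $\scol_r \geq \scol_1 \geq d$ from degeneracy, and observe that pairwise touching radius-$s$ elements confine the whole bramble to a bounded-radius region that the girth forces to induce a forest, whence Seymour--Thomas gives order at most $2$. The only differences are cosmetic constants (girth $>6s+3$ with a ball-of-radius-$(3s+1)$ argument versus the paper's girth $\geq 8s+4$ with a diameter-$(4s+1)$ argument).
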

\begin{proof}
Due to a classical result of Erd\H{o}s  \cite{Erd59}, there exists a graph $G$ with minimum degree at least $d$ and girth at least $8s+4$. Then $\scol_r(G) \geq \scol_1(G) \geq d+1$. 
It remains to show that $\bn_{s}(G)=2$. 

Let $\mathcal{B}$ be a \dep-$s$ bramble in $G$. 
Since any two bramble elements touch and each bramble element has radius at most $s$,
it follows that the graph $G(\mathcal{B})$ induced by the union of the elements of $\mathcal{B}$ has diameter at most $4s+1$. Since this is less than half the girth of $G$, $G(\mathcal{B})$ must induce a tree. Trees have treewidth $1$, so every bramble (in particular every \dep-$s$ bramble) has a hitting set of size at most $2$~\cite{ST1993}. On the other hand, any edge $uv$ gives rise to the \dep-$s$ bramble $\mathcal{B}:=\{u,v\}$ that has order $2$, so $\bn_{s}(G)=2$. 
\end{proof}

We now turn to the proof of Theorem~\ref{thm:summaryresults}. We remark that several of the arguments draw inspiration from those in~\cite{Reed} for the $r=\infty$ case.
We start with item~\ref{item:one} of Theorem~\ref{thm:summaryresults}, and then proceed in order.
\begin{thm}
For every graph $G$ and integer $r\geq0$, 
$$\omega_r(G)\leq \bn_r(G)\leq \scol_{4r+1}(G).$$
\end{thm}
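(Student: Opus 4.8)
The plan is to prove the two inequalities separately; the lower bound is immediate, while the upper bound is where the work lies, via a path construction in an optimal order. For $\omega_r(G)\le \bn_r(G)$, set $k=\omega_r(G)$. Since $K_k$ occurs in a \dep-$r$ minor of $G$, taking the $k$ branch sets of the vertices forming the clique gives pairwise disjoint connected sets, each inducing a subgraph of radius at most $r$, with an edge between every two of them (because they correspond to a complete graph). These sets form a \dep-$r$ bramble, and because they are pairwise disjoint, any hitting set must pick a vertex from each, so its order is at least $k$. Hence $\bn_r(G)\ge k$.

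For $\bn_r(G)\le \scol_{4r+1}(G)$, I would fix a linear order $\pi$ attaining $\scol_{4r+1}(G)$ and let $\mathcal{B}$ be an arbitrary \dep-$r$ bramble. The goal is to produce a single vertex $v$ whose strong reachability set $\sreach[4r+1,\pi,v]$ hits every element of $\mathcal{B}$; since $|\sreach[4r+1,\pi,v]|\le \scol_{4r+1}(G)$, this set is then a hitting set of the required size. For each $B\in\mathcal{B}$, let $m(B)$ denote its $\pi$-minimum vertex, and choose $B_0\in\mathcal{B}$ so that $\pi(m(B_0))$ is as large as possible; set $v:=m(B_0)$. Two consequences of this choice drive the proof: every vertex of $B_0$ is at least $v$ in $\pi$, and every element $B$ contains the vertex $m(B)$, whose $\pi$-value is at most $\pi(v)$.

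The heart of the argument is to exhibit, for an arbitrary $B\in\mathcal{B}$, a vertex of $B$ lying in $\sreach[4r+1,\pi,v]$. The radius-at-most-$r$ condition gives $G[B_0]$ diameter at most $2r$, so there is a path of length at most $2r$ from $v$ to the vertex of $B_0$ witnessing that $B_0$ touches $B$, and all of its internal vertices lie in $B_0$ and hence exceed $v$ in $\pi$. Crossing the touching edge if necessary (one extra step) brings us to a vertex $x\in B$. I would then consider the vertices of $B$ whose $\pi$-value is at most $\pi(v)$; this set is nonempty as it contains $m(B)$, and I would take a shortest path in $G[B]$ from $x$ to it. Its internal vertices avoid that set and so exceed $v$ in $\pi$, while its length is at most the diameter of $G[B]$, hence at most $2r$. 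Concatenating the two legs, we reach a vertex of $B$ of $\pi$-value at most $\pi(v)$ along a path of length at most $2r+1+2r=4r+1$ all of whose internal vertices exceed $v$; this vertex lies in $\sreach[4r+1,\pi,v]\cap B$, as desired.

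The main obstacle is exactly the requirement that every internal vertex of the reachability path exceed $v$ in the order. This single constraint dictates both design decisions: choosing $v$ as the largest among the per-element minima $m(B)$ forces the entire first leg inside $B_0$ to stay above $v$, and stopping at the first vertex of $B$ that drops to or below $\pi(v)$—rather than travelling all the way to $m(B)$—keeps the second leg inside $B$ above $v$ until its endpoint. The two radius bounds then keep both legs short enough to fit within $4r+1$ steps, which is precisely why $\scol_{4r+1}$ (and not $\scol_r$) appears.
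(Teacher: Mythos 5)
Your proof is correct and follows essentially the same strategy as the paper's: for the lower bound, the branch sets of a depth-$r$ clique minor-model form a depth-$r$ bramble whose disjoint elements force a large hitting set; for the upper bound, you pick $v$ as the $\pi$-maximum among the per-element $\pi$-minima and, for each element $B$, build a path of length at most $2r+1+2r=4r+1$ through $B_0$ and then $B$, stopping at the first vertex whose $\pi$-value drops to at most $\pi(v)$, which lands in $\sreach[4r+1,\pi,v]\cap B$. The only cosmetic difference is that the paper truncates a single $v^*$-to-$v_B$ path at the first low vertex while you phrase the second leg as a shortest path in $G[B]$ to the set of low vertices, and the paper argues for an arbitrary order $\pi$ rather than an optimal one; these are equivalent.
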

\begin{proof}
The first inequality holds because every depth-$r$ minor-model of a complete graph $K_n$ is also a \dep-$r$ bramble of order $n$.

For the second inequality, let $\mathcal{B}$ be a \dep-$r$ bramble of $G$ and let $\pi$ be an ordering of $V(G)$. 
For each bramble element $B\in \mathcal{B}$, let $v_B$ be the minimum vertex of $B$ w.r.t.\ $\pi$, and let $v^*$ be the maximum vertex among all $v_B$  w.r.t.\ $\pi$. More formally, $v_B$ is the vertex such that $\pi(v_B)=\min_{w\in B} \pi(w)$ and $v^*$ is such that $\pi(v^*)= \max_{B \in \mathcal{B}} \min_{w\in B} \pi(w)$. 
Since $\mathcal{B}$ is a \dep-$r$ bramble, each bramble element of $\mathcal{B}$ has diameter at most $2r$. Thus for every $B\in \mathcal{B}$ there is a path $P(v^*,B)$ from $v^*$ to $v_B$ of length at most $4r+1$. Traversing $P(v^*,B)$ starting from $v^*$, let $x_B$ be the first vertex such that $\pi(x_B)\leq \pi(v^*)$. (Note that $x_B$ indeed exists since $\pi(v_B)\leq \pi(v^*)$.)
Then $X:=\bigcup_{B\in \mathcal{B}} x_{B}$ is both a hitting set of $\mathcal{B}$ and a subset of $\sreach[4r+1,\pi,v^*]$. Since we derived this for arbitrary $\pi$ and $\mathcal{B}$, it follows that $\bn_r(G) \leq \scol_{4r+1}(G)$.
\end{proof}

\begin{thm}
For every graph $G$ and integer $r\geq0$, 
 $$  \bn_r(G) \leq (5r+1)\cdot (\omega_{5r+1}(G))^2.$$
\end{thm}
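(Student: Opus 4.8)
The plan is to fix a depth-$r$ bramble $\mathcal B$ realizing $k := \bn_r(G)$, so that the minimum hitting set of $\mathcal B$ has size $k$, and to extract from it a depth-$(5r+1)$ minor-model of a clique of order roughly $\sqrt{k/r}$. Since the order of such a clique is at most $\omega_{5r+1}(G)$, this would give $k\le (5r+1)\,\omega_{5r+1}(G)^2$. (The case $r=0$ is immediate: a depth-$0$ bramble consists of pairwise-adjacent singletons, i.e.\ a clique, so $\bn_0(G)=\omega_0(G)\le\omega_1(G)\le\omega_1(G)^2$; so I may assume $r\ge 1$.)

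First I would \emph{localise} the bramble. Fix any element $B_0$ with centre $c_0$, so $B_0\subseteq\ball(c_0,r)$. Every other element touches $B_0$ and has radius at most $r$, hence diameter at most $2r$, so a one-line triangle-inequality computation shows that every element of $\mathcal B$ lies inside $U:=\ball_G(c_0,3r+1)$. I would then run a BFS from $c_0$ and let $L_0,\dots,L_{3r+1}$ be the distance layers of $U$. Because an element is connected with diameter at most $2r$, the set of layers it meets is an interval of length at most $2r+1$; this ``thinness'' of the elements in the layering is the feature the argument exploits.

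Next I would \emph{concentrate} the order into a single layer. Take a minimum hitting set $X=\{x_1,\dots,x_k\}$; by minimality each $x_i$ has a private element $B_i$ with $B_i\cap X=\{x_i\}$, so the $x_i$ are distinct and $x_i\notin B_j$ for $j\ne i$. By pigeonhole among the $3r+2$ layers, some layer $L_{\ell^*}$ contains at least $k/(3r+2)$ of the $x_i$; let $S$ index these, with $s:=|S|\ge k/(3r+2)$. We then have $s$ pairwise-touching connected radius-$\le r$ sets inside $U$, with pairwise-distinct private vertices all lying in the common layer $L_{\ell^*}$.

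The crux — and the step I expect to be the main obstacle — is to turn these $s$ pairwise-touching sets into a depth-$(5r+1)$ clique minor of order at least $\sqrt s$. I would argue by a dichotomy on a maximal subfamily $D\subseteq S$ whose elements $\{B_i:i\in D\}$ are pairwise vertex-disjoint: being pairwise disjoint and pairwise touching, these already form a depth-$r$ minor-model of $K_{|D|}$, so $|D|\le\omega_r(G)\le\omega_{5r+1}(G)$, and if $|D|\ge\sqrt s$ we are done. In the complementary case every $B_i$ meets $\bigcup_{j\in D}B_j$, so many of the $B_i$ attach to a single radius-$\le r$ ``core'' element; here I would build the clique minor by hand, using the core together with the BFS tree to route, inside the sets $B_i$, pairwise-disjoint connectors joining the private vertices $x_i$, so that the resulting branch sets (each of radius at most $5r+1$) are pairwise disjoint and pairwise touching. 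Controlling this routing — keeping the connectors disjoint while still guaranteeing that every pair of branch sets touches — is the delicate point, and it is precisely here that both the quadratic dependence $\omega_{5r+1}(G)^2$ (from the square-root loss in the dichotomy) and the radius $5r+1$ (element radius $r$ plus a connector through $U$) enter. Combining the two cases would give $\omega_{5r+1}(G)\ge\sqrt s\ge\sqrt{k/(3r+2)}$, and hence $\bn_r(G)=k\le(3r+2)\,\omega_{5r+1}(G)^2\le(5r+1)\,\omega_{5r+1}(G)^2$.
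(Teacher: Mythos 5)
Your reductions up to the dichotomy are sound: the localisation into $\ball(c_0,3r+1)$, the private elements $B_i$ with $B_i\cap X=\{x_i\}$, the pigeonhole over layers, and Case A (a pairwise-disjoint, pairwise-touching subfamily of radius-$\le r$ sets is a depth-$r$ minor-model of a clique) are all correct. The gap is Case B, and it is not a routing technicality that more care can fix: the configuration you arrive at there --- many pairwise-touching radius-$\le r$ elements with distinct private vertices, all meeting one core element --- does not, by itself, contain a clique minor of order $3$ at \emph{any} depth. Concretely, the private elements can all pass through a single vertex: let $y$ be adjacent to $x_1,\dots,x_m$ and take $B_i=\{y,x_i\}$. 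Their union is a star, hence a tree, so no connectors ``inside the sets $B_i$'', and none along a BFS tree (also a tree), can yield pairwise-disjoint, pairwise-touching branch sets. Moreover this local picture is consistent with the global bramble having order $m$: add a clique on $z_1,\dots,z_{m-1}$ with every $z_j$ adjacent to $y$ and to every $x_i$, and add the elements $\{x_i,z_j\}$ to the bramble; then $\{x_1,\dots,x_m\}$ is a minimum hitting set, the $B_i$ are legitimate private elements, $r=1$, yet the large shallow clique forced by the theorem sits among the $z_j$'s, entirely outside the core-plus-$B_i$ data your Case B retains. So any completion of Case B must re-engage the whole bramble rather than the local configuration; as structured, the argument cannot be finished, and you correctly sensed that this step was the obstacle.

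The paper avoids this trap by never trying to build the clique minor \emph{out of} bramble elements. Instead it grows a depth-$(5r+1)$ minor-model $\mathcal{M}$ of a complete graph whose branch sets each have at most $1+(5r+1)(t-1)$ vertices, where $t=\omega_{5r+1}(G)$, choosing $\mathcal{M}$ to lexicographically minimise (size of the union $R(\mathcal{M})$ of bramble elements avoiding $\mathcal{M}$, then $|V(\mathcal{M})|$). If $R(\mathcal{M})\neq\emptyset$, then $R(\mathcal{M})$ induces a subgraph of radius $\le 3r+1$ (any two avoiding elements touch), each branch set $M_i$ admits a bramble element meeting $M_i$ and no other branch set (by minimality), and a short path through that element reaches $R(\mathcal{M})$; merging these paths with a connected piece of $R(\mathcal{M})$ gives a new branch set of radius $\le 5r+1$ touching all previous ones, contradicting minimality. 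Hence $R(\mathcal{M})=\emptyset$ and $V(\mathcal{M})$ is a hitting set of size at most $(5r+1)t^2$. The decisive difference from your plan: the clique minor is constructed so as to \emph{hit} the bramble (its order bounded by $t$ only because it is a depth-$(5r+1)$ clique minor), rather than extracted from the bramble elements themselves, which is exactly what sidesteps the star obstruction above.
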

\begin{proof}
 We prove that $\bn_r(G) \leq (5r+1) \cdot t^2$, where $t:=\omega_{5r+1}(G)$.
Let $\mathcal{B}$ be a \dep-$r$ bramble of $G$.
We call $\mathcal{M}$ a \emph{good model} if
\begin{itemize}
\item $\mathcal{M}$ is a depth-$(5r+1)$ minor-model of a complete graph in $G$, and
\item $|V(M)|\leq 1+5(r+1)(t-1)$ for every branch set $M\in \mathcal{M}$.
\end{itemize}

Note that there exists at least one good model, since a single vertex defines one. Given a minor-model $\mathcal{M}$, let $V(\mathcal{M})$ denote the union of the branch sets of $\mathcal{M}$, and let $R(\mathcal{M})$ denote the union of all bramble elements of $\mathcal{B}$ that are disjoint from $V(\mathcal{M})$. Observe that, since every two bramble elements touch, the subgraph induced by $R(\mathcal{M})$ has radius at most $3r+1$.

\smallskip

From now on, we fix a good model $\mathcal {M}$ in $G$ such that
$(|R(\mathcal{M})|, |V(\mathcal{M})|)$ is lexicographically minimized among all good models. Let $M_1,M_2,\ldots, M_s$ be the branch sets of $\mathcal{M}$. Note that by the definition of $t$, we have $s\leq t$.

\smallskip

The proof is done if $R(\mathcal{M})= \emptyset$, because then $V(\mathcal{M})$ is a hitting set of $\mathcal{B}$ of order at most \[\sum_{1=1}^{s} |V(M_i)| \leq t+ (5r+1)(t-1)t \leq (5r+1)\cdot t^2.\]
Thus we may assume that $R(\mathcal{M})$ is non-empty, and from there we will derive a contradiction.
\smallskip

For every branch set $M_i\in \mathcal{M}$ it holds that
 \begin{equation}\label{ineq:}
 |R(\mathcal{M}-M_i)| > |R(\mathcal{M})|, 
 \end{equation}
  for otherwise we would have $|R(\mathcal{M}-M_i)| \le|R(\mathcal{M})|$ and $|V(\mathcal{M}-M_i)| <|V(\mathcal{M})|$, contradicting the minimality of $\mathcal{M}$.

Thanks to (\ref{ineq:}) we know that each branch set $M_i\in \mathcal{M}$ satisfies the following key
property: There is a bramble element $B\in\mathcal{B}$ that intersects $M_i$ and avoids all other branch sets of $\mathcal{M}$. Take a shortest path $P_i$ from $M_i$ to the non-empty set $R(\mathcal{M})$ through $M_i \cup B$. Let $x_i$ be the endpoint of $P_i$ in $R(\mathcal{M})$. Observe that $P_i$ contains at most $2r+1$ edges, and thus at most $2r+2$ vertices. Let $Q_i$ be $P_i$ minus its endpoint in $M_i$ (so $Q_i$ contains at most $2r$ edges).

Recall that the subgraph induced by $R(\mathcal{M})$ has radius at most $3r+1$, so there exists a vertex $x^*\in R(\mathcal{M})$ and for each $1\le i \le s$, a path on at most $3r+1$ edges, included in $R(\mathcal{M})$ and connecting $x^*$ to  $x_i$. Let $X\subseteq R(\mathcal{M})$ be the union of the vertex sets of these $s$ paths. Note that $X$ is connected and contains at most $1+(3r+1)s$ vertices. 

Now define a new branch set $M_{s+1}$, consisting of $X$ together with the paths $Q_1,\ldots, Q_s$. Then the radius of $M_{s+1}$ is at most $5r+1$, and $|V(M_{s+1})|\leq 1+(5r+1)s\le 1+(5r+1)(t-1)$. 
Moreover, by construction $M_{s+1}$ is connected to all previous branch sets. We conclude that $\mathcal{M}+M_{s+1}$ is a good model with $|R(\mathcal{M}+M_{s+1})|<|R(\mathcal{M})|$, contradicting the minimality of $\mathcal{M}$.
\end{proof}

\begin{thm}
For every graph $G$ and integer $r\geq 0$,
$$\link_r(G) \leq \bn_r(G)$$
\end{thm}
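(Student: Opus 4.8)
The plan is to show that any depth-$r$ $k$-linked set $S$ gives rise to a depth-$r$ bramble of order at least $k$, which immediately yields $\link_r(G) \le \bn_r(G)$. Recall the definition: $S$ is depth-$r$ $k$-linked if for every vertex set $X$ with $|X| < k$, the graph $G - X$ contains a connected subset of radius at most $r$ that captures more than half of $S$. The natural candidate bramble is
\[
\mathcal{B} := \set{B : B \text{ is a connected, radius-}{\le}r\text{ subset of } V(G) \text{ with } |B \cap S| > |S|/2}.
\]
Each element of $\mathcal{B}$ induces a subgraph of radius at most $r$, so $\mathcal{B}$ is a valid collection of depth-$r$ candidates; it remains to verify the two bramble properties (pairwise touching, and order at least $k$).

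First I would check that the elements pairwise touch. Take $B_1, B_2 \in \mathcal{B}$. Since each of $B_1 \cap S$ and $B_2 \cap S$ contains strictly more than half the vertices of $S$, the two intersections cannot be disjoint, so $B_1 \cap B_2 \supseteq (B_1 \cap S) \cap (B_2 \cap S) \neq \emptyset$. Thus any two elements of $\mathcal{B}$ already share a vertex, which is even stronger than touching. Hence $\mathcal{B}$ is a genuine depth-$r$ bramble.

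Next I would bound the order of $\mathcal{B}$ from below. Suppose for contradiction that $\mathcal{B}$ has a hitting set $X$ with $|X| < k$. Because $S$ is depth-$r$ $k$-linked and $|X| < k$, there is a connected set $B$ in $G - X$ of radius at most $r$ with $|B \cap S| > |S|/2$. This set $B$ is by definition an element of $\mathcal{B}$, yet $B \cap X = \emptyset$, so $X$ fails to hit $B$ — a contradiction. Therefore every hitting set of $\mathcal{B}$ has size at least $k$, meaning $\mathcal{B}$ has order at least $k$, and so $\bn_r(G) \ge k$. Taking $k = \link_r(G)$ (i.e.\ $S$ a largest linked set) gives the claim.

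There is essentially no hard step here: the proof is a direct translation between the two definitions, with the ``more than half'' threshold doing exactly the work needed to force pairwise intersection. The only point to state carefully is that $\mathcal{B}$ is nonempty and well-defined (one may take $X = \emptyset$ in the linkedness definition to exhibit at least one element), and that the radius bound on elements is inherited verbatim, so the bramble really is depth-$r$ rather than something larger. I expect no genuine obstacle; the mild subtlety, if any, is simply making sure the strict majority condition is used in both directions — for touching and for the hitting-set lower bound.
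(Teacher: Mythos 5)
Your proof is correct and follows essentially the same approach as the paper: both arguments build a depth-$r$ bramble out of connected radius-$\le r$ sets capturing a strict majority of $S$ (the paper indexes one such element $B_X$ per small set $X$, while you take the whole family, an inessential difference), use the majority condition for pairwise intersection, and derive the order bound from the same contradiction with a hypothetical small hitting set.
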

\begin{proof}
Let $S$ be a \dep-$r$ $k$-linked set, where $k:=\link_r(G)$.
By definition of \dep-$r$ $k$-linkedness, for each set $X\subseteq V(G)$ of order $|X|<k$, there exists an $r$-ball $B_X$ in $G-X$ that contains more than half of the vertices of $S$. Let $\mathcal{B}:=(B_X)_{X \subseteq V(G), |X|<k}$ be the set of these balls. The elements of $\mathcal{B}$ pairwise intersect since each of them contains more than half of $S$, and therefore $\mathcal{B}$ is a \dep-$r$ bramble of $G$. 
Furthermore, the order of $\mathcal{B}$ is at least $k$, since any hypothetical hitting set $X$ of order less than $k$ would not hit the bramble element $B_{X}$. Thus $\bn_r(G) \geq k$, as desired.
\end{proof}

\begin{thm}For every graph $G$ and integer $r\geq 0$,
$$\bn_r(G) \leq 2 \cdot \link_{3r+1}(G).$$
\end{thm}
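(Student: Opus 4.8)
The plan is to start from a \dep-$r$ bramble $\mathcal{B}$ of maximum order $k:=\bn_r(G)$, fix a \emph{minimum} hitting set $S$ of $\mathcal{B}$ (so that $|S|$ equals the order of $\mathcal{B}$, namely $k$), and prove that $S$ is a \dep-$(3r+1)$ $\lceil k/2\rceil$-linked set. This immediately gives $\link_{3r+1}(G)\ge \lceil k/2\rceil \ge k/2$, hence $\bn_r(G)\le 2\cdot\link_{3r+1}(G)$.

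To verify linkedness, I fix an arbitrary vertex set $X$ with $|X|<\lceil k/2\rceil$ and let $U_X$ be the union of all elements of $\mathcal{B}$ that are disjoint from $X$. First I would record the geometric properties of $U_X$. Since $|X|<k$, the set $X$ is not a hitting set, so $U_X\neq\emptyset$; since its constituent elements pairwise touch and are connected, $U_X$ is connected and contained in $G-X$; and since each element has radius at most $r$ (hence diameter at most $2r$), travelling from the center of one surviving element to any vertex of another through their touching point gives a path of length at most $r+1+2r=3r+1$, so $U_X$ has radius at most $3r+1$. Thus $U_X$ is a legitimate connected radius-$(3r+1)$ subset of $G-X$, and it is the only candidate I will use.

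The counting step then shows that $U_X$ captures a strict majority of $S$, and I would argue it by contradiction. Suppose $|S\cap U_X|\le |S|/2=k/2$, and consider $X\cup(S\cap U_X)$. Every element of $\mathcal{B}$ either meets $X$, or is disjoint from $X$ and therefore contained in $U_X$, in which case (as $S$ is a hitting set) it meets $S\cap U_X$; so $X\cup(S\cap U_X)$ is again a hitting set of $\mathcal{B}$. Its size is at most $(\lceil k/2\rceil-1)+\lfloor k/2\rfloor=k-1<k$, contradicting that the order of $\mathcal{B}$ is $k$. Hence $|S\cap U_X|>|S|/2$, exactly as the definition of \dep-$(3r+1)$ $\lceil k/2\rceil$-linkedness demands.

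The step I expect to be the crux is the choice of $U_X$ rather than the whole component of $G-X$ that contains the surviving elements. Using that whole component would make the hitting-set argument marginally cleaner but would wreck the radius bound, since a component can be arbitrarily large; restricting to the union of the surviving bramble elements is what simultaneously yields radius $3r+1$ \emph{and} keeps the minimality argument intact, precisely because an element disjoint from $X$ is \emph{wholly} contained in $U_X$, so its intersection with $S$ also lies in $U_X$. The remaining points — nonemptiness of $U_X$, the radius computation, and the floor/ceiling bookkeeping in the size bound — are routine.
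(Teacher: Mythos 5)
Your proof is correct and takes essentially the same approach as the paper's: both fix a minimum hitting set $S$ of the \dep-$r$ bramble and show it is \dep-$(3r+1)$ $\lceil |S|/2\rceil$-linked, using the same radius computation ($r+1+2r=3r+1$ via touching radius-$r$ elements) and the same contradiction that otherwise $X$ together with the captured part of $S$ would be a hitting set of size at most $k-1$. The only cosmetic difference is the witness set: you take the union of all bramble elements disjoint from $X$, while the paper takes the $(3r+1)$-ball in $G-X$ centered at the center of one surviving element (which contains your set); the arguments are otherwise identical.
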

\begin{proof}
Let $S$ be a minimum order hitting set for a \dep-$r$ bramble $\mathcal{B}$. It suffices to show that $S$ is \dep-$(3r+1)$ $(\lceil |S|/2 \rceil)-$linked. To that end, let $Y$ be a set of fewer than $\lceil |S|/2 \rceil$ vertices. 

Since $Y$, being smaller than $S$, is not a hitting set for $\mathcal{B}$, there exists a bramble element $B \in \mathcal{B}$ which is disjoint from $Y$. Because $B$ is an element of a \dep-$r$ bramble, we can choose a vertex $u$ in $B$ such that (in the graph induced by $B$ and thus also in $G-Y$) every vertex of $B$ is at distance at most $r$ from $u$. We let $U$ denote the $(3r+1)$-ball in $G-Y$ centered at $u$. It suffices to show that $U$ contains more than half of $S$, which we will now do.

Suppose for a contradiction that $|U\cap S|\leq \lfloor \frac{|S|}{2} \rfloor$.
If there exists any, let $C\in \mathcal{B}$ be a bramble element that is disjoint from $Y$. As $S$ hits $C$, we may choose a vertex $c \in C \cap S$. Since $C$ and $B$ touch and $C$ has radius at most $r$, there is a path in $G-Y$ joining $u$ and $c$ of length at most $3r+1$, so that $c\in U \cap S$. In other words, $U\cap S$ hits $C$.
 
It follows that $(U\cap S)\cup Y$ is a hitting set for $\mathcal{B}$ of size
$|(U\cap S)|+|Y| < \lfloor \frac{|S|}{2} \rfloor + \lceil \frac{|S|}{2} \rceil = |S|$, contradicting the minimality of $S$.
\end{proof}

\begin{thm}
For every graph $G$ and integer $r\geq 0$,
$$\bn_r(G) \leq \WL_{4r+1}(G).$$
\end{thm}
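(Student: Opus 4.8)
The plan is to exhibit a depth-$(4r+1)$ well-linked set of size $\bn_r(G)$, namely a minimum-order hitting set of an optimal bramble. Concretely, let $\mathcal{B}$ be a depth-$r$ bramble of order $k=\bn_r(G)$, and let $S$ be a hitting set of $\mathcal{B}$ of minimum size, so that $|S|=k$ and, crucially, \emph{no} vertex set of size less than $k$ is a hitting set of $\mathcal{B}$. I claim $S$ is depth-$(4r+1)$ well-linked, which immediately yields $\WL_{4r+1}(G)\geq k=\bn_r(G)$.

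To verify well-linkedness I would fix $A,B\subseteq S$ with $|A|=|B|=m$ and a vertex set $Y$ with $|Y|<m$, and produce an $A$--$B$ path of length at most $4r+1$ in $G-Y$. The key step is to consider the two auxiliary sets $H_A:=Y\cup(S\setminus A)$ and $H_B:=Y\cup(S\setminus B)$. Each has size at most $(m-1)+(k-m)=k-1<k$, so neither can be a hitting set of $\mathcal{B}$. Hence there is a bramble element $B'$ disjoint from $H_A$ and a bramble element $B''$ disjoint from $H_B$. Disjointness from $H_A$ means $B'$ avoids $Y$ and meets $S$ only inside $A$; since $S$ hits $B'$, I may pick $a\in B'\cap A$, and then $a\notin Y$. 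Symmetrically I obtain $b\in B''\cap B$ with $b\notin Y$.

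It then remains to connect $a$ to $b$ through $B'\cup B''$. Since $\mathcal{B}$ is a bramble, $B'$ and $B''$ touch, and because both avoid $Y$ so does a shortest subpath realizing the touch. Each of $B',B''$ induces a subgraph of radius at most $r$, hence of diameter at most $2r$, so routing $a$ to the touching vertex of $B'$ (length $\le 2r$), across the touch (length $\le 1$), and then to $b$ inside $B''$ (length $\le 2r$) yields a walk, and therefore a path, of length at most $4r+1$ in $G-Y$ joining $a\in A$ to $b\in B$. This is exactly the required $A$--$B$ path.

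The substantive idea, and the only step that is not routine bramble geometry, is the simultaneous use of the two undersized sets $H_A$ and $H_B$: each fails to be a hitting set, and the escaping elements $B'$ and $B''$ are automatically anchored in $A$ and in $B$ respectively while avoiding $Y$. I expect the points to get right to be the size bookkeeping $|H_A|,|H_B|<k$ (which uses $|Y|\le m-1$ together with $|A|=|B|=m$) and the check that the concatenated path has length at most $4r+1$ and stays within $B'\cup B''$, so that it avoids $Y$. Degenerate cases such as $Y=\emptyset$, $A=B$, or $B'=B''$ are handled by the very same argument and require no separate treatment.
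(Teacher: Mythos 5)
Your proposal is correct and takes essentially the same approach as the paper: both use a minimum hitting set $S$ of an optimal depth-$r$ bramble as the candidate well-linked set, exploit minimality to find bramble elements avoiding $Y$ that are anchored in $A$ and in $B$ respectively, and then route a path of length at most $4r+1$ through the two touching elements of radius at most $r$. The only difference is presentational: you argue directly via the two undersized sets $H_A$ and $H_B$, whereas the paper runs the contrapositive (assuming no short $A$--$B$ path exists and deriving that $(S\setminus A)\cup Y$ would be a smaller hitting set).
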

\begin{proof}
We prove that any minimum hitting set $S$ for a \dep-$r$ bramble $\mathcal{B}$ is \dep-$(4r+1)$ well-linked.
Suppose not, then there exist two equal-sized subsets $A, B$ of $S$ and a set $Y$ of fewer than $|A|$ vertices, such that $G-Y$ does not contain any $A-B$ path of length $\leq 4r+1$.
Consider the sub-bramble $\mathcal{B}_Y$ of $\mathcal{B}$ that contains all bramble elements of $\mathcal{B}$ that are disjoint from $Y$. If both $A$ and $B$ intersect some (not necessarily the same) bramble element of $\mathcal{B}_Y$ then these bramble elements touch, so then there must be a path of length at most $4r+1$ between $A$ and $B$ in $G-Y$, a contradiction. So without loss of generality $\bigcup_{C \in \mathcal{B}_Y} C \cap A$ is empty. But then $(S-A)+Y$ is a hitting set for $\mathcal{B}$, contradicting the minimality of $S$.
\end{proof}

\begin{thm}
For every graph $G$ and integer $r\geq 0$,
$$\WL_r(G) \leq 4 \cdot (1+\link_{3r}(G))^2.$$
\end{thm}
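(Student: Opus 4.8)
The plan is to show that the extremal well-linked set is itself a linked set with the right parameters, and then read off the bound by contradiction. Let $S$ be a \dep-$r$ well-linked set with $|S|=\WL_r(G)=:w$, put $k:=1+\link_{3r}(G)$, and suppose towards a contradiction that $w>4k^2$. I will prove that $S$ is \dep-$(3r)$ $k$-linked; since $\link_{3r}(G)=k-1<k$, this contradicts the definition of $\link_{3r}(G)$ and forces $w\le 4k^2=4(1+\link_{3r}(G))^2$. We may assume $k\ge 2$, since a single vertex shows $\link_{3r}(G)\ge 1$ whenever $G$ is nonempty.

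The only property of well-linkedness I would use is the following \emph{separation lemma}: for every $X\subseteq V(G)$ with $|X|<k$, there are no two disjoint sets $A,B\subseteq S$ with $|A|,|B|\ge k$ such that every vertex of $A$ is at distance more than $r$ from every vertex of $B$ in $G-X$. Indeed, trimming $A,B$ to size exactly $k$ and applying well-linkedness with $Y=X$ (note $|X|<k=|A|$) produces an $A$--$B$ path of length at most $r$ in $G-X$, contradicting the distance assumption.

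Now fix any $X$ with $|X|<k$; the goal is to exhibit a connected radius-$3r$ set in $G-X$ meeting $S$ in more than $w/2$ vertices. \emph{Step 1:} I would first find a vertex $u$ whose radius-$r$ ball in $G-X$ contains at least $k$ vertices of $S$. If no such $u$ existed, I would take a maximal subset $D\subseteq S\setminus X$ whose vertices are pairwise at distance more than $r$ in $G-X$; by maximality the radius-$r$ balls centred at $D$ cover $S\setminus X$, and since each such ball meets $S$ in at most $k-1$ vertices, a counting argument together with $w>4k^2$ yields $|D|\ge 2k$. Splitting $D$ into two halves of size $k$ then gives two sets violating the separation lemma, a contradiction. \emph{Step 2:} For the vertex $u$ from Step 1, the near set $A$ of vertices of $S$ within distance $r$ of $u$ has size at least $k$, while every vertex of $S$ at distance more than $3r$ from $u$ lies at distance more than $r$ from all of $A$; hence, by the separation lemma, fewer than $k$ vertices of $S$ lie outside the radius-$3r$ ball of $u$. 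Consequently this ball, which is connected and has radius at most $3r$, contains at least $w-2k>w/2$ vertices of $S$, as required.

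Since $X$ was arbitrary, $S$ is \dep-$(3r)$ $k$-linked, completing the contradiction. I expect \emph{Step 1} to be the crux. Unlike the case $r=\infty$, well-linkedness only guarantees a \emph{single} short path between two equal-sized subsets (Menger's theorem fails for bounded-length paths), so one cannot simply route half of $S$ into one ball by a fan of disjoint paths; the packing-and-split argument is exactly what rules out the ``spread-out'' configuration in which every small ball is light, and it is there that the quadratic loss $w\le 4k^2$ is incurred.
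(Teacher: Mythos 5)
Your proof is correct, and at the top level it follows the same strategy as the paper: both arguments show that a depth-$r$ well-linked set $S$ must be depth-$3r$ $k$-linked for $k\approx\tfrac12\sqrt{|S|}$, and both rest on the same key fact (your ``separation lemma'') that well-linkedness of $S$ forbids two disjoint subsets of $S$ of size exceeding $|X|$ whose vertices are pairwise at distance more than $r$ in $G-X$. The difference lies in how the packing/counting step is executed. The paper splits into two cases according to whether some $2r$-ball in $G-X$ contains more than $k$ vertices of $S$: if yes, it separates that ball's $S$-vertices from those outside the concentric $3r$-ball; if no, it greedily collects $r$-balls, each capturing a new vertex of $S$, until the concentric $2r$-balls cover half of $S$, and then separates $U_r\cap S$ from $S\setminus(X\cup U_{2r})$. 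You instead take a maximal $r$-separated subset $D$ of $S\setminus X$: if every $r$-ball is light then $D$ is large (your counting goes through, since $(k-1)|D|\geq |S|-(k-1)$ and $|S|>4k^2$ force $|D|\geq 2k$), and splitting $D$ in half violates the separation lemma; otherwise a heavy $r$-ball exists, and its concentric $3r$-ball swallows all but fewer than $k$ vertices of $S$, giving the linkedness witness directly. Your maximal-net argument is a slightly cleaner substitute for the paper's greedy covering (it avoids tracking the two unions $U_r$ and $U_{2r}$ and the stopping-rule bookkeeping), while the two routes incur the same radius blow-up $r\to 3r$ and land on the same quadratic bound $4\cdot(1+\link_{3r}(G))^2$.
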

\begin{proof}
It suffices to show that any \dep-$r$ well-linked set $S$ is \dep-$3r$ $k_S$-linked, where $k_S:=\lfloor \tfrac{1}{2}\sqrt{|S|} \rfloor$.

Assume for a contradiction that there exists a set $X$ of fewer than $k_S$ vertices such that every $3r$-ball in $G-X$ contains at most half of the vertices of $S$.

\smallskip
We first consider the case that $G-X$ contains a $2r$-ball $B_{2r}(v)$, centered at some vertex $v$, containing more than $k_S$ vertices of $S$. Let $B_{3r}(v)$ be the $3r$-ball in $G-X$ centered at the same vertex. Since $|B_{3r}(v)\cap S|\leq \lfloor |S|/2 \rfloor $, we know that $S-(X \cup B_{3r}(v))$ has size at least  $|S| - (k_S-1)- \lfloor |S|/2 \rfloor \geq k_S$.
Thus we can choose a subset $A$ of $B_{2r}(v)\cap S$ and a subset $B$ of $S-(X \cup B_{3r}(v))$ such that $|A|=|B|=k_S>|X|$. However, by construction there cannot exist any $A-B$ path of length at most $r$ in $G-X$, contradicting our assumption that $S$ is \dep-$r$ well-linked.

\smallskip
 We have thus reduced to the case that no $2r$-ball in $G-X$ contains more than $k_S$ vertices of $S$.

 \smallskip
 
We now greedily build a collection of $r$-balls in $G-X$, collecting at least one new vertex from $S$ at each step, and stopping as soon as the corresponding $2r$-balls (again in $G-X$ and with the same centers) cover at least half of $S$. Let $U_{r}$ denote the union of the $r$-balls, and let $U_{2r}$ denote the union of the corresponding $2r$-balls.

Since each of the $2r$-balls contains at most $k_S$ vertices from $S$, we need at least $(|S|/2) / k_S \geq k_S$ balls. Combining this with the fact that each $r$-ball adds at least one new vertex of $S$, we conclude that 
$$|U_{r} \cap S| \geq k_S.$$

On the other hand, note that 
$|U_{2r} \cap S| < \frac{|S|}{2} + k_S,$ since we stopped adding balls as soon as the union was at least $\frac{|S|}{2}$.
Therefore
$$|S- (X \cup U_{2r})| \geq |S|-(k_S-1)- \left( \frac{|S|-1}{2} + k_S  \right) \geq k_S.$$

Thus we can choose a subset $A$ of $U_{r} \cap S$ and a subset $B$ of $S- (X \cup U_{2r})$ such that $|A|=|B|=k_S> |X|$. However, as in the first case, every $A-B$ path in $G-X$ must have length more than~$r$, contradicting that $S$ is \dep-$r$ well-linked.
\end{proof}

\begin{thm}
For every graph $G$ and integers $r\geq 0, t\geq 1$,
$$\bn_{r,t}(G) \leq \bn_r(G) \leq t\cdot \bn_{3r+1,t}(G).$$
\end{thm}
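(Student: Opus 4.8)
The statement to prove is
\[
\bn_{r,t}(G) \leq \bn_r(G) \leq t\cdot \bn_{3r+1,t}(G).
\]

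The first inequality is immediate from the definitions: any \dep-$r$ $t$-bramble is in particular a \dep-$r$ bramble, so its order is at most $\bn_r(G)$. Thus the whole content lies in the second inequality, and the plan is to take a \dep-$r$ bramble $\mathcal{B}$ realizing $\bn_r(G)$ and manufacture from it a \dep-$(3r+1)$ $t$-bramble whose order is at least $\bn_r(G)/t$.

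My plan is to mimic the structure of the earlier proofs (as the authors flag, the arguments follow~\cite{Reed}): fix a minimum hitting set $S$ of $\mathcal{B}$, so $|S| = \bn_r(G)$, and build a new bramble whose elements are ``inflated'' versions of the old ones. The key idea for turning a plain bramble into a $t$-bramble is to force a large common intersection by blowing up each element into a ball. Concretely, for each vertex $v$ that lies in some bramble element, I would consider the $r$-ball-based inflation; but the cleaner route is to argue contrapositively about hitting sets. First I would observe that since every two elements of $\mathcal{B}$ touch and each has radius at most $r$, the union $G(\mathcal{B})$ of all elements has bounded radius, and for any vertex $u$ in an element, the $(3r+1)$-ball around $u$ swallows every touching element. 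This is exactly the mechanism used in the $\bn_r \le 2\link_{3r+1}$ proof, where a single $(3r+1)$-ball centered at a well-chosen vertex already hits many elements of $\mathcal{B}$.

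The heart of the argument, and the step I expect to be the main obstacle, is producing $t$-wise (rather than merely pairwise) intersection while losing only a factor of $t$ in the order. The natural strategy is a covering/pigeonhole argument: I would attempt to cover $S$ by few ``clusters'' each contained in a single radius-$(3r+1)$ ball, using the fact that no hitting set of size smaller than $|S|$ exists. More precisely, one builds a collection of $(3r+1)$-balls $B_{X}$ (one for each small set $X$ that fails to be a hitting set, centered inside an element disjoint from $X$) and shows these form a \dep-$(3r+1)$ bramble in which one can enforce $t$-fold intersection by restricting attention to balls that all contain a common large piece of $S$. The factor $t$ should emerge because to guarantee that $t$ chosen elements share a vertex it suffices that each contains strictly more than a $(1-1/t)$-fraction of some fixed reference set of size roughly $|S|/t$ does not immediately work, so the real work is choosing the correct fractional threshold so that any $t$ elements intersect while the order (minimum hitting set size) stays $\ge |S|/t$. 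I would verify this threshold computation carefully, since that is where the constant $t$ is won or lost; everything else is a routine transcription of the radius bookkeeping (touching elements of radius $r$ lie within distance $3r+1$, hence inflated balls of radius $3r+1$ behave well) already established in the preceding lemmas.
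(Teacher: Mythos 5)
The skeleton of your construction is the paper's: it too takes, for each set $X$ that is too small to be a hitting set of the \dep-$r$ bramble $\mathcal{B}$, a ball $T_X$ of radius $3r+1$ in $G-X$ centered at a center $v_X$ of a bramble element $B_X$ disjoint from $X$, and its order bound is exactly your observation that a small $Y$ cannot hit $T_Y$. But the step you yourself flag as ``the real work'' --- forcing $t$-wise intersection --- is precisely the step you do not supply, and it is the entire content of the theorem, so the proposal has a genuine gap. The paper's mechanism requires no fractional threshold and no pigeonhole at all: writing $k:=\bn_r(G)$, one indexes the balls only by sets $X$ with $|X|<\lceil k/t\rceil$. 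Then for any $t$ chosen elements $T_{X_1},\dots,T_{X_t}$, the union $X_1\cup\dots\cup X_t$ has fewer than $k$ vertices, so it is still not a hitting set of $\mathcal{B}$, and hence some element $B\in\mathcal{B}$ avoids all of $X_1,\dots,X_t$ simultaneously. Since $B$ touches each $B_{X_i}$, both have radius at most $r$, and both avoid $X_i$, every vertex of $B$ lies within distance $3r+1$ of $v_{X_i}$ in $G-X_i$; that is, $B\subseteq T_{X_i}$ for every $i$, so the $t$ balls share all of $B$. The factor $t$ is thus paid entirely in the size restriction on the index sets (which caps the order guarantee at $\lceil k/t\rceil$), not in any covering or threshold computation.

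For what it is worth, your fractional idea can be repaired, and seeing how clarifies where you went astray: the correct reference set is the minimum hitting set $S$ itself (with $|S|=k$), not a set of size roughly $|S|/t$. With the same index bound $|X|<\lceil k/t\rceil$, the ball $T_X$ contains \emph{every} element of $\mathcal{B}$ disjoint from $X$ (same radius argument), so $(T_X\cap S)\cup X$ is a hitting set of $\mathcal{B}$, and minimality of $S$ gives $|T_X\cap S|\geq |S|-|X|$. Each ball therefore misses at most $\lceil k/t\rceil-1$ vertices of $S$, and $t(\lceil k/t\rceil-1)<k=|S|$, so any $t$ balls meet inside $S$. Either route closes the gap, but since you carried out neither --- and explicitly left the threshold computation unresolved --- the proposal as written does not constitute a proof.
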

\begin{proof}
The lower bound on $\bn_r(G)$ is true because every \dep-$r$ $t$-bramble is a \dep-$r$ bramble, so we only need to prove the upper bound.
Let $\mathcal{B}$ be a \dep-$r$ bramble of order $k:=\bn_r(G)$.
For each $X\subseteq V(G)$ with $|X|< \lceil \tfrac{k}{t} \rceil$, we can choose a bramble element $B_X$ of $\mathcal{B}$ that is disjoint from $X$, since $X$ is too small to be a hitting set of $\mathcal{B}$. Let $v_X$ be a vertex of $B_X$ such that $B_X$ is contained in the $r$-ball centered at $v_X$. Now define $T_X$ as the $(3r+1)$-ball in $G-X$ centered at $v_X$. 
We claim that $\mathcal{T}:= \left( T_X \right)_{X \subseteq V(G), |X| < \tfrac{k}{t} }$ is a \dep-$(3r+1)$ $t$-bramble of $G$ of order at least $\lceil \tfrac{k}{t} \rceil$. (This then implies that $k\leq t\cdot \bn_{3r+1,t}(G)$, as desired.)

Indeed, consider $t$ sets $X_1, \ldots, X_t$ of size smaller than $\lceil \tfrac{k}{t} \rceil$. 
Since $|\bigcup_{1\leq i \leq t} X_i|< k$, there is an element $B$ of $\mathcal{B}$ that is disjoint from $\bigcup_{1\leq i \leq t} X_i$.  For each $1\leq i \leq t$, because $B_{X_i}$ and $B$ touch, are disjoint from $X_i$ and both have radius at most $r$, we see that every vertex of $B$ can be reached from $v_{X_i}$ by some path in $G-X_i$ of length at most $3r+1$, so that $B \subseteq T_{X_i}$.
Thus $B \subseteq \bigcap_{1\leq i \leq t} T(X_i)$. In particular, the intersection of $T(X_1),\ldots, T(X_t)$ is non-empty, so $\mathcal{T}$ is indeed a \dep-$(3r+1)$ $t$-bramble. Finally,  $\mathcal{T}$ does not have any hitting set $Y$ of size strictly smaller than $\lceil \tfrac{k}{t} \rceil$, because $Y$ would not hit the element $T_{Y}$. So $\mathcal{T}$ has order at least $\lceil \tfrac{k}{t} \rceil$.
\end{proof}

\section{Final remarks}\label{sec:ccl}

Since large treewidth (and hence large $\scol_{\infty}$) is equivalent to the existence of a large grid minor~\cite{RobertsonSeymourV}, one may wonder what is the relation between $\scol_r$ and depth-$r$ grid minors for finite~$r$, and whether it helps to answer the questions in this paper. For a graph $G$, let $\GR_r(G)$ denote the largest integer $t$ such that $G$ contains the $t \times t$ grid as a depth-$r$ minor. \\

Any $t \times t$ grid is a (depth-$0$) minor of $K_{t^2}$, so we have $\omega_r \leq (\GR_{r})^2$. However, $\GR_r$ cannot be upper bounded by a function of $\omega_r$ and $r$, as can be seen by considering large grids, which have no $K_5$-minor. Thus the behavior of $\GR_r$ is really different from that of $\omega_r$, $\bn_r$, $\link_r$ and $\WL_r$, even when restricted to classes of polynomial expansion.\\

So could grid minors instead be closely related to strong colorings?
The answer is no again, in the sense that it is not possible to upper bound $\scol_r$ in terms of $\GR_r$ either.
The proof is very similar to that of Lemma~\ref{lem:counterexample}.
In fact, the same type of argument shows that for every $r, k\geq 1$ and for \emph{every} graph $H$ which is not a forest, there are graphs without $H$ as a depth-$k$ minor but with arbitrarily large strong $r$-coloring number.\\

Thus we see that for finite $r$, the grid minors are not as intimately related to  $\scol_r$ as they are for infinite $r$. 
A question that remains is whether this connection does survive when restricted to classes of polynomial expansion.

\begin{qn}
Is it true that for every graph class $\mathcal{C}$ with polynomial expansion, there exists a polynomial function $f$ such that for every integer $r\geq 0$ and every graph $G \in \mathcal{C}$, 
\[
\scol_r(G)\leq f(r,\GR_{\poly(r)}(G))?
\]
\end{qn}

We have seen that there is a duality between the bramble number and tree-width~\cite{ST1993}. A natural question is whether a similar duality holds between the bounded radius variants of these parameters (in the tree-width case, we could ask for a tree-decomposition in which each bag has bounded radius). As above, large grids show that such duality cannot exist: the depth-$r$ bramble number is bounded by a function of $\omega_r$, which is at most 4 for any planar graph (and in particular for grids). On the other hand, grids have arbitrarily large tree-width, regardless of any condition on the radius of the bags.

\acknowledgements
This work was initiated during the online workshop  \textit{Generalized coloring
numbers and friends}, which took place in February 2021 and was co-organized by Micha\l{} Pilipczuk and the last author, following the \emph{Sparse Graphs Coalition} initiative of Ross Kang and Jean-S\'ebastien Sereni. 
We are grateful to Raphael Steiner for interesting discussions that led to \cref{lem:counterexample}. We also thank the reviewers for their helpful comments and suggestions.

\nocite{*}
\bibliographystyle{abbrvnat}

%

\end{document}